\newtheorem{thm}{Theorem}
\newtheorem{lem}[thm]{Lemma}
\newtheorem{remark}{Remark}
\theoremstyle{remark}
\theoremstyle{definition}
\newcommand{\R}{\mathbb{R}}
\newcommand{\N}{\mathbb{N}}
\newcommand{\rd}{\,{\rm d}}
\newcommand{\bsx}{{\boldsymbol x}}
\newcommand{\bst}{{\boldsymbol t}}
\newcommand{\bszero}{{\boldsymbol 0}}
\newcommand{\uu}{\mathfrak{u}}
\newcommand{\vv}{\mathfrak{v}}
\newcommand{\cP}{\mathcal{P}}
\newcommand{\cA}{\mathcal{A}}
\title{The curse of dimensionality for the $L_p$-discrepancy with finite $p$}  
\author{Erich Novak and  Friedrich Pillichshammer}
\date{\today} 
\begin{document}

\maketitle

\begin{abstract}
The $L_p$-discrepancy is a quantitative measure for the irregularity of distribution of an $N$-element point set in the $d$-dimensional unit cube, which is closely related to the worst-case error of quasi-Monte Carlo algorithms for numerical integration. Its inverse for dimension $d$ and error threshold $\varepsilon \in (0,1)$ is the minimal number of points in $[0,1)^d$ such that the minimal normalized $L_p$-discrepancy is less or equal $\varepsilon$. It is well known, that the inverse of $L_2$-discrepancy grows exponentially fast with the dimension $d$, 
i.e., we have the curse of dimensionality, whereas the inverse of $L_{\infty}$-discrepancy depends exactly linearly on $d$. The behavior of inverse of $L_p$-discrepancy for general $p \not\in \{2,\infty\}$ has been an open problem for many years. In this paper we show that the $L_p$-discrepancy suffers from the curse of dimensionality for all $p$ in $(1,2]$ which are of the form $p=2 \ell/(2 \ell -1)$ with $\ell \in \mathbb{N}$. 

This result follows from a more general result that we show for the worst-case error of numerical integration in an anchored  Sobolev space with anchor 0 of once differentiable functions in each variable whose first derivative has finite $L_q$-norm, where $q$ is an even positive integer 
satisfying $1/p+1/q=1$.  
\end{abstract}

\centerline{\begin{minipage}[hc]{130mm}{
{\em Keywords:} Discrepancy, numerical integration, curse of dimensionality, tractability, quasi-Monte Carlo\\
{\em MSC 2010:} 11K38, 65C05, 65Y20}
\end{minipage}}

\section{Introduction and main result}\label{sec:intro}

For a set $\cP$ consisting of $N$ points $\bsx_1,\bsx_2,\ldots,\bsx_N$ in the $d$-dimensional unit-cube $[0,1)^d$ the local discrepancy function $\Delta_{\cP}:[0,1]^d \rightarrow \R$ is defined as $$\Delta_{\cP}(\bst)=\frac{|\{k \in \{1,2,\ldots,N\}\ : \ \bsx_k \in [\bszero,\bst)\}|}{N}-{\rm volume}([\bszero,\bst)),$$  
for $\bst=(t_1,t_2,\ldots,t_d)$ in $[0,1]^d$, where $[\bszero,\bst)=[0,t_1)\times [0,t_2)\times \ldots \times [0,t_d)$. For a parameter $p \in [1,\infty]$ the $L_p$-discrepancy of the point set $\cP$ is defined as the $L_p$-norm of the local discrepancy function $\Delta_{\cP}$, i.e., $$L_{p,N}(\cP):=\left(\int_{[0,1]^d} |\Delta_{\cP}(\bst)|^p \rd \bst\right)^{1/p} \quad \mbox{for $p \in [1,\infty)$,}$$ and $$L_{\infty,N}(\cP):=\sup_{\bst \in [0,1]^d} | \Delta_{\cP}(\bst)| \quad  \mbox{for $p=\infty$.}$$ Traditionally, the $L_{\infty}$-discrepancy is called star-discrepancy and is denoted by $D_N^{\ast}(\cP)$ rather than $L_{\infty,N}(\cP)$. The study of $L_p$-discrepancy has its roots in the theory of uniform distribution modulo one; see \cite{BC,DT97,kuinie,mat} for detailed information. It is of particular importance because of its close relation to numerical integration. We will refer to this issue in Section~\ref{sec:int}.

Since one is interested in point sets with $L_p$-discrepancy as low as possible it is obvious to study for $d,N \in \N$ the quantity $${\rm disc}_p(N,d):=\min_{\cP} L_p(\cP),$$ where the minimum is extended over all $N$-element point sets $\cP$ in $[0,1)^d$. This quantity is called the $N$-th minimal $L_p$-discrepancy in dimension $d$.

Traditionally, the $L_p$-discrepancy is studied from the point of view of a fixed dimension $d$ and one asks for the asymptotic behavior for increasing sample sizes $N$. 
The celebrated result of Roth~\cite{Roth} is the most famous result in this direction and can be seen as the initial point of discrepancy theory. Later, Schmidt~\cite{sch77} extended Roth's lower bound to arbitrary $p>1$. For $p \in (1,\infty)$ it is known that for every dimension $d \in \N$ there exist positive reals $c_{d,p},C_{d,p}$ such that for every $N \ge 2$ it holds true that $$c_{d,p} \frac{(\log N)^{\frac{d-1}{2}}}{N} \le {\rm disc}_p(N,d) \le C_{d,p} \frac{(\log N)^{\frac{d-1}{2}}}{N}.$$ The upper bound was proven by Davenport~\cite{D56} for $p=2$, $d= 2$, by Roth~\cite{R80} for $p= 2$ and arbitrary $d$ and finally by Chen \cite{C80} in the general case. For more details see \cite{BC}. Explicit constructions of point sets can be found in \cite{CS02,D56,D14,DP14a,M15,S06}.

Similar results, but less accurate, are available also for $p\in \{1,\infty\}$. See the above references for further information. The currently best 
asymptotical lower bound in the $L_\infty$-case can be found in \cite{BLV}.  

All the classical bounds have a poor dependence on the dimension $d$. 
For large $d$ these bounds are only meaningful in an asymptotic sense 
(for very large $N$) and do not give any information about the 
discrepancy in the pre-asymptotic regime (see, e.g., \cite{NW08,NW10} or \cite[Section~1.7]{DKP} for discussions). Nowadays, motivated by applications of point sets with low discrepancy for numerical integration, there is dire need of information about the dependence of discrepancy on the dimension. 

This problem is studied with the help of the so-called inverse of $L_p$-discrepancy (or, in a more general context, the information complexity; see Section~\ref{sec:int}). This concept compares the minimal $L_p$-discrepancy with the initial discrepancy $${\rm disc}_p(0,d):=\left(\int_{[0,1]^d} \left({\rm volume}([\bszero,\bst))\right)^p \rd \bst\right)^{1/p},$$ which can be interpreted as the $L_p$-discrepancy of the empty point set, and asks for the minimal number $N$ of nodes that is necessary in order to achieve that the $N$-th minimal $L_p$-discrepancy is smaller than $\varepsilon$ times ${\rm disc}_p(0,d)$ for a threshold $\varepsilon \in (0,1)$. In other words, for $d \in \N$ and $\varepsilon \in (0,1)$ the inverse of the minimal $L_p$-discrepancy is defined as $$N_p^{{\rm disc}}(\varepsilon,d):=\min\{N \in \N \ : \ {\rm disc}_p(N,d) \le \varepsilon \ {\rm disc}_p(0,d)\}.$$ The question is now how fast $N_p^{{\rm disc}}(\varepsilon,d)$ increases, when $d \rightarrow \infty$ and $\varepsilon \rightarrow 0$. 

It is well known and easy to check that for the initial $L_p$-discrepancy we have 
\begin{equation}\label{initdisc}
{\rm disc}_p(0,d)=\left\{ 
\begin{array}{ll}
\frac{1}{(p+1)^{d/p}} & \mbox{if $p \in [1,\infty)$},\\[0.5em]
1 & \mbox{if $p=\infty$}.
\end{array}
\right.
\end{equation}
Here we observe a difference in the cases of finite and infinite $p$. While for $p=\infty$ the initial discrepancy equals 1 for every dimension $d$, for finite values of $p$ the initial discrepancy tends to zero exponentially fast with the dimension. 
 
For $p \in \{2,\infty\}$ the behavior of the inverse of the minimal $L_p$-discrepancy is well understood. In the $L_2$-case it is known that for all $\varepsilon \in (0,1)$ we have $$(1.125)^d (1-\varepsilon^2) \le N_2^{{\rm disc}}(\varepsilon,d) \le 1.5^d \varepsilon^{-2}.$$ Here the lower bound was first shown by Wo\'{z}niakowski in \cite{Wo99} (see also \cite{NW01,SW1998}) and the upper bound follows from an easy averaging argument, see, e.g., \cite[Sec.~9.3.2]{NW10}. 

In the $L_{\infty}$-case it was shown by Heinrich, Novak, Wasilkowski and Wo\'{z}nia\-kow\-ski in \cite{hnww} that there exists an absolute positive constant $C$ such that for every $d \in \N$ and $\varepsilon \in (0,1)$ we have $$N_{\infty}^{{\rm disc}}(\varepsilon,d) \le C d \varepsilon^{-2}.$$ The currently smallest known value of $C$ is $6.23401\ldots$ as shown in \cite{GPW} (thereby improving on other results from \cite{A11,D21,GH21,PW20}). On the other hand, there exist numbers $c>0$ and $\varepsilon_0 \in (0,1)$ such that for all $d \in \N$ and all $\varepsilon \in (0,\varepsilon_0)$ we have $$N_{\infty}^{{\rm disc}}(\varepsilon,d) \ge c d \varepsilon^{-1},$$ as shown by Hinrichs in \cite{Hi04}.

So while the inverse of $L_2$-discrepancy grows exponentially fast with the dimension $d$, the inverse of star-discrepancy depends only linearly on the dimension $d$. One says that the $L_2$-discrepancy suffers from the curse of dimensionality. In information based complexity theory the behavior of the inverse of star-discrepancy is called ``polynomial tractability'' (see, e.g., \cite{NW08,NW10}).

As we see, the situation is clear (and quite different) for $p \in \{2,\infty\}$. But what happens for all other $p \not \in \{2,\infty\}$? This question has been open for many years.

Just for completeness we remark that the problem has been considered also for other (semi) norms of the local discrepancy function rather than $L_p$-norms,  that are in some sense ``close'' to the $L_{\infty}$-norm. Positive results (tractability) have been obtained for exponential Orlicz norms in \cite{DHPP}. On the other hand, for the BMO-seminorm the curse of dimensionality has been shown in \cite{P22}.

All known proofs for lower bounds on the inverse of $L_2$-discrepancy are based on Hilbert-space methods. A very powerful tool in this context developed in \cite{NW01} (see also \cite[Chapter~12]{NW10}) is the method of decomposable reproducing kernels. Unfortunately, it is not obvious how these $L_2$-methods could be applied to the general $L_p$-case directly. So one has to find a new way or one has to figure out the essence of the Hilbert-space based proofs with the hope to get rid of all $L_2$-specific factors in order to find a way to extend these proofs to the general $L_p$-case. We shall follow the latter path. 

Our main result is formulated in the following theorem.

\begin{thm}\label{thm1}
For every $p$ of the form $p=\frac{2 \ell}{2 \ell-1}$ with $\ell \in \mathbb{N}$ there exists a real $C_p$ that is strictly larger than 1, such that for all $\varepsilon \in (0,1)$ we have $$N_p^{{\rm disc}}(\varepsilon,d) \ge C_p^d(1+o(1)) \quad \mbox{for $d \rightarrow \infty$}.$$ In particular, for all these $p$ the $L_p$-discrepancy suffers from the curse of dimensionality.
\end{thm}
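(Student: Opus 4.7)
The plan is to reduce the discrepancy statement to an equivalent exponential lower bound on the worst-case error for numerical integration in the anchored Sobolev space $F_{d,q}$ announced in the abstract, where $q=2\ell$ is the H\"older conjugate of $p=2\ell/(2\ell-1)$, and then to exploit the tensor-product structure of $F_{d,q}$ together with the evenness of $q$ to derive the bound. The first step is the reduction: starting from a standard Hlawka--Zaremba identity
\begin{equation*}
I_d(f)-\frac{1}{N}\sum_{k=1}^N f(\bsx_k)=(-1)^d\int_{[0,1]^d}\Delta_{\cP}(\bst)\,\frac{\partial^d f}{\partial t_1\cdots\partial t_d}(\bst)\rd\bst
\end{equation*}
valid for $f\in F_{d,q}$ anchored at $\bszero$, H\"older's inequality with $1/p+1/q=1$ yields a sharp identification of $L_{p,N}(\cP)$ with the worst-case quadrature error on the unit ball of $F_{d,q}$, while a Riesz-representation style computation identifies the norm of the integration functional $I_d$ on $F_{d,q}$ with ${\rm disc}_p(0,d)$. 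Thus the theorem reduces to showing that the information complexity of integration on $F_{d,q}$ is bounded below by $C_p^d(1+o(1))$ for some $C_p>1$.

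Next, I would exploit the tensor-product nature of $F_{d,q}$. For any product function $f(\bsx)=\prod_{j=1}^d g_j(x_j)$ with $g_j(0)=0$,
\begin{equation*}
\|f\|_{F_{d,q}}^q=\prod_{j=1}^d\|g_j'\|_{L_q([0,1])}^q,
\end{equation*}
so $F_{d,q}$ factorizes cleanly over coordinates. Starting from a carefully chosen univariate atom $g\in F_{1,q}$ supported in a short interval, I would build a multidimensional fooling function by taking tensor products and then signed sums of translates along a product grid of mesh $1/M$. For any point set $\cP$ with fewer than $C_p^d$ points, a pigeonhole and averaging argument selects enough grid cells that are either empty of nodes or balanced so that the resulting $\widetilde F\in F_{d,q}$ satisfies $\tfrac{1}{N}\sum_k\widetilde F(\bsx_k)=0$ while $|I_d(\widetilde F)|/\|\widetilde F\|_{F_{d,q}}>\varepsilon\cdot{\rm disc}_p(0,d)$, giving the desired contradiction and hence the bound with a base $C_p>1$ governed by a one-dimensional optimisation problem of maximising a suitable ratio of $\int_0^1 g\rd x$ to the $L_q$-norm of $g'$, subject to a control on the $L_q$-norm of sums of translates of $g$.

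The main obstacle is exactly this last point: one has to estimate $\|\sum_a c_a\,g(\cdot-a/M)\|_{L_q([0,1])}$ sharply enough for the optimal atom $g$ and integer $M\ge 2$ to pin down a constant strictly larger than $1$. The evenness of $q=2\ell$ is indispensable here, since only then does $|h|^q=h^{2\ell}$ unfold as a polynomial and the $L_q$-norm of a sum of translates admit the multinomial expansion
\begin{equation*}
\Bigl\|\sum_{a=0}^{M-1}c_a\,g(\cdot-a/M)\Bigr\|_{L_q}^{q}=\sum_{k_0+\cdots+k_{M-1}=2\ell}\binom{2\ell}{k_0,\dots,k_{M-1}}\prod_{a}c_a^{k_a}\int_0^1\prod_{a}g(x-a/M)^{k_a}\rd x,
\end{equation*}
which reduces the whole estimate to finitely many explicit mixed moments of $g$ and enables a Gram-matrix--type analysis that plays the role of orthogonality available in the Hilbert case $q=2$. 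For non-integer or odd $q$ no such polynomial identity is available and the bookkeeping above collapses, which is precisely why the argument, and hence the theorem, is restricted to $p$ of the form $2\ell/(2\ell-1)$ with $\ell\in\N$.
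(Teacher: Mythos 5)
Your reduction to numerical integration in the anchored space $F_{d,q}$ and your identification of the evenness of $q=2\ell$ as the mechanism that makes $|h|^q=h^{2\ell}$ expandable as a polynomial are both on the right track; they match the paper's framing. However, there is a genuine gap: the step you yourself flag as ``the main obstacle'' --- controlling $\|\sum_a c_a\,g(\cdot-a/M)\|_{L_q}$ --- is not carried out, and this is precisely where all the work lies. As written, your proposal is an outline of intent rather than a proof.

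The construction you sketch (translates of a univariate bump along a mesh-$1/M$ grid, signed coefficients, averaging over balanced cells) is also genuinely different from what the paper does, and the difference matters. The paper does not use translates of a generic bump. It decomposes the \emph{specific} univariate worst-case function $h_1(x)=1-(1-x)^p$ into three pieces $h_{1,1},\,h_{1,2,(0)},\,h_{1,2,(1)}$ whose supports and derivative-supports are engineered so that almost all mixed moments
\[
I(\alpha,\beta,\gamma)=\int_0^1 h_{1,1}'(x)^{\alpha}\,h_{1,2,(0)}'(x)^{\beta}\,h_{1,2,(1)}'(x)^{\gamma}\,\rd x
\]
vanish by disjoint support, and the remaining ones (with $\gamma=0$, $\alpha,\beta$ both odd) are shown to be \emph{nonnegative} after a delicate choice of the two parameters $a^*,c^*$. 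That sign control is exactly what lets one drop the ``bad'' terms from the multinomial expansion and conclude $\|g_d\|_{d,q}\le\|h_d\|_{d,q}$ for the fooling function $g_d$, which is obtained not by averaging but by \emph{deleting} from the tensor expansion of $h_d$ those box-configurations $\prod_{j\in\vv}[0,a^*]\times\prod_{j\in\uu\setminus\vv}[a^*,1]\times\prod_{j\notin\uu}[0,1]$ that contain a node; the counting then hinges on the elementary observation that at most $N$ of the $2^{|\uu|}$ configurations for a given $\uu$ can be hit. In your proposal neither the nonnegativity of the relevant mixed moments, the disjoint-support structure that kills the rest, nor the $2^{|\uu|}-N$ counting is present. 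Without those, the ``Gram-matrix--type analysis'' you gesture at is not a step you can actually take, and there is no reason to expect a generic translate-based construction to produce signs you can control for all even $q$. To repair the proposal you would need to replace the grid-of-translates idea with a structured three-piece decomposition of $h_1$ (or prove an analogous sign/positivity statement for your translate family), and replace the vague pigeonhole/averaging by the exact ``at most $N$ of $2^{|\uu|}$ dyadic boxes are occupied'' count that yields the $(1-N/2^{k})_+$ factor in the final estimate.
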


At first glance, one would think that the result could be easily extended to any number $p \in (1,2]$ by means of monotonicity of the $L_p$-norm and squeezing any $p$ between two values of the form given in the theorem above. However, notice that we have to take the normalized discrepancy into account, which destroys monotonicity. Having a more careful look at the problem shows that it might be not so easy to follow this first intuition and in fact, so far we did not succeed in showing the curse of dimensionality for all $p \in (1,2]$. 

Theorem~\ref{thm1} will follow from a more general result about the integration problem in the anchored Sobolev space with a $q$-norm that will be introduced and discussed in the following Section~\ref{sec:int}. This result will be stated as Theorem~\ref{thm2}.

Sometimes a generalized notion of $L_p$-discrepancy is studied, where every point $\bsx_k$ is equipped with an own weight $a_k$  (rather than the weight $1/N$ for every point). The result from Theorem~\ref{thm1} even holds for this generalized $L_p$-discrepancy. This more general conclusion will be stated in Section~\ref{sec:rem} as Theorem~\ref{thm3}.

We add a conjecture: We guess that the curse holds for all $p$ with $1<p<\infty$.

\section{Relation to numerical integration}\label{sec:int}

It is well known that the $L_p$-discrepancy is related to multivariate integration (see, e.g., \cite[Chapter~9]{NW10}). From now on let $p,q \ge 1$ be such that $1/p+1/q=1$. For $d=1$ let $W_q^1([0,1])$ be the space of absolutely continuous functions whose first derivatives belong to the space $L_q([0,1])$. For $d>1$ consider the $d$-fold tensor product space which is denoted by $$W_q^{\boldsymbol{1}}:=W_q^{(1,1,\ldots,1)}([0,1]^d)$$ and which is the Sobolev space of functions on $[0,1]^d$ that are once differentiable in each variable and whose first derivative $\partial^d f/\partial \bsx$ has finite $L_q$-norm, where $\partial \bsx=\partial x_1 \partial x_2 \ldots \partial x_d$. Now consider the subspace of functions that satisfy the boundary conditions $f(\bsx)=0$ if at least one component of $\bsx=(x_1,\ldots,x_d)$ equals 0 and equip this subspace with the norm $$\|f\|_{d,q}:=\left(\int_{[0,1]^d} \left|\frac{\partial^d}{\partial \bsx}f(\bsx)\right|^q \rd \bsx \right)^{1/q} \quad \mbox{for $q \in [1,\infty)$,}$$ and $$\|f\|_{d,\infty}:=\sup_{\bsx \in [0,1]^d}\left|\frac{\partial^d}{\partial \bsx}f(\bsx)\right|  \quad \mbox{for $q=\infty$.}$$ That is, consider the space $$F_{d,q}:=\{f \in W_q^{\boldsymbol{1}} \ : \ f(\bsx)=0\ \mbox{if $x_j=0$ for some $j \in [d]$ and $\|f\|_{d,q}< \infty$}\},$$ where here and throughout this paper we write $[d]:=\{1,2,\ldots,d\}$.

Now consider multivariate integration $$I_d(f):=\int_{[0,1]^d} f(\bsx) \rd \bsx \quad \mbox{for $f \in F_{d,q}$}.$$
We approximate the integrals $I_d(f)$ by algorithms based on $N$ function evaluations of the form 
\begin{equation}\label{def:Alg}
A_{d,N}(f)=\varphi(f(\bsx_1),f(\bsx_2),\ldots,f(\bsx_N)),
\end{equation}
where $\varphi:\R^N \rightarrow \R$ is an arbitrary function and where $\bsx_1,\bsx_2,\ldots,\bsx_N$ are points in $[0,1]^d$, called integration nodes. Since $f\equiv 0$ belongs to $F_{d,q}$ we choose $\varphi$ such that $\varphi(0,0,\ldots,0)=0$. Typical examples are linear algorithms of the form 
\begin{equation}\label{def:linAlg}
A_{d,N}^{{\rm lin}}(f)=\sum_{k=1}^N a_j f(\bsx_k),
\end{equation}
where $\bsx_1,\bsx_2,\ldots,\bsx_N$ are in $[0,1]^d$ and $a_1,a_2,\ldots,a_N$ are real weights that we call integration weights. If $a_1=a_2=\ldots =a_N=1/N$, then the linear algorithm \eqref{def:linAlg} is a so-called quasi-Monte Carlo algorithm, for which we will write $A_{d,N}^{{\rm QMC}}$.

Define the worst-case error of an algorithm \eqref{def:Alg} by 
\begin{equation}\label{eq:wce}
e(F_{d,q},A_{d,N})=\sup_{f \in F_{d,q}\atop \|f\|_{d,q}\le 1} \left|I_d(f)-A_{d,N}(f)\right|.
\end{equation}
For a quasi-Monte Carlo algorithm $A_{d,N}^{{\rm QMC}}$ it is well known (see, e.g., \cite{SW1998}) that 
$$
e(F_{d,q},A_{d,N}^{{\rm QMC}})= L_p(\overline{\cP}),
$$ 
where $L_p(\overline{\cP})$ is the $L_p$-discrepancy of the point set\footnote{We remark that in \cite{SW1998} the anchored space with anchor $\boldsymbol{1}$ is considered which results in a worst case error of exactly
$L_p(\cP)$, where $\cP$ is the node set of the QMC rule. Here we have chosen the anchor as $\boldsymbol{0}$, and therefore
in the formula for the worst-case error the point set $\overline{\cP}$ appears. For details see also \cite[Section~9.5.1]{NW10} for the case $p=2$.} 
\begin{equation}\label{def:oP}
\overline{\cP}=\{\boldsymbol{1} - \bsx_k \ : \ k=1,2,\ldots,N\},
\end{equation}
where $\boldsymbol{1} - \bsx_k$ is defined as the component-wise difference of the vector containing only ones and $\bsx_k$. From this point of view we now study the more general problem of numerical integration in $F_{d,q}$ rather than only the $L_p$-discrepancy (which corresponds to quasi-Monte Carlo algorithms -- although with suitably ``reflected'' points). 

We define the $N$-th minimal worst-case error as $$e_q(N,d):=\min_{A_{d,N}} |e(F_{d,q},A_{d,N})|$$ where the minimum is extended over all algorithms of the form \eqref{def:Alg} based on $N$ function evaluations along points $\bsx_1,\bsx_2,\ldots,\bsx_N$ from $[0,1)^d$. Note that for all $d,N \in \N$ we have 
\begin{equation}\label{ine:errdisc}
e_q(N,d) \le {\rm disc}_p(N,d).
\end{equation} 

The initial error is $$e_q(0,d)=\sup_{f \in F_{d,q}\atop \|f\|_{d,q}\le 1} \left|I_d(f)\right|.$$ 

\begin{lem}\label{le:interr}
Let $d \in \N$ and let $q \in (1,\infty]$ and $p \in [1,\infty)$ with $1/p+1/q=1$. Then we have $$e_q(0,d)=\frac{1}{(p+1)^{d/p}}$$ and the worst-case function in $F_{d,q}$ is given by $h_d(\bsx)=h_1(x_1) \cdots h_1(x_d)$ for $\bsx=(x_1,\ldots,x_d) \in [0,1]^d$, where $h_1(x)=1-(1-x)^p$.
\end{lem}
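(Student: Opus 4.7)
The plan is to reduce the problem to a straightforward instance of Hölder's inequality via the standard fundamental-theorem-of-calculus representation available in the anchored Sobolev space.

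First I would exploit the boundary condition. Since $f \in F_{d,q}$ vanishes whenever some coordinate is $0$, iterating the fundamental theorem of calculus in each variable gives the representation
$$
f(\bsx) = \int_{[\bszero,\bsx]} g(\bst)\,\rd \bst, \qquad g := \frac{\partial^d f}{\partial \bsx}.
$$
Plugging this into $I_d(f)$ and swapping the order of integration with Fubini yields
$$
I_d(f) = \int_{[0,1]^d} g(\bst) \prod_{j=1}^d (1-t_j)\,\rd \bst.
$$
So the integration functional is (through the isometric identification $f \mapsto g$) represented by the kernel $K(\bst) := \prod_{j=1}^d (1-t_j)$.

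Next I would apply Hölder's inequality with conjugate exponents $p,q$:
$$
|I_d(f)| \;\le\; \|g\|_{L_q([0,1]^d)}\cdot \|K\|_{L_p([0,1]^d)} \;=\; \|f\|_{d,q}\cdot \|K\|_{L_p}.
$$
Because $K$ is a product, $\|K\|_{L_p}$ factorizes coordinate-wise, giving
$$
\|K\|_{L_p} = \left(\prod_{j=1}^d \int_0^1 (1-t_j)^p\,\rd t_j\right)^{1/p} = \frac{1}{(p+1)^{d/p}}
$$
when $p \in [1,\infty)$, with the obvious modification $\|K\|_{L_1} = 2^{-d}$ in the case $q=\infty$, $p=1$. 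This already proves the upper bound $e_q(0,d) \le (p+1)^{-d/p}$.

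Finally, I would verify that the candidate $h_d$ attains equality in Hölder's inequality. The condition for equality forces $g$ to be (a multiple of) $K^{p-1} = \prod_j (1-t_j)^{p-1}$, using $p/q = p-1$; integrating this $g$ in each variable from $0$ to $x_j$ produces exactly the product $\prod_j\bigl(1-(1-x_j)^p\bigr)/p$, i.e.\ a scalar multiple of $h_d$. A direct tensor-product computation then gives
$$
I_d(h_d) = \left(\frac{p}{p+1}\right)^d, \qquad \|h_d\|_{d,q} = \left(\int_0^1 p^q(1-x)^{(p-1)q}\,\rd x\right)^{d/q} = \frac{p^d}{(p+1)^{d/q}},
$$
using $(p-1)q = p$, so that $I_d(h_d)/\|h_d\|_{d,q} = (p+1)^{d/q - d} = (p+1)^{-d/p}$, matching the upper bound. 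The case $q=\infty$ is handled analogously with the extremal $g \equiv 1$, giving $h_1(x) = x = 1-(1-x)^1$.

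No step presents a real obstacle; the only points requiring care are the boundary condition (which makes the integral representation of $f$ in terms of $g$ valid without lower-order boundary terms) and keeping track of the arithmetic identity $(p-1)q = p$ which ensures the extremizer in Hölder's inequality lies in $F_{d,q}$ and has a clean closed form.
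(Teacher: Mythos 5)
Your proof is correct and takes essentially the same route as the paper's: represent $I_d(f)$ as $\int g(\bst)\prod_j(1-t_j)\,\rd\bst$ via the anchored FTC representation, apply H\"older with exponents $p,q$, and exhibit $h_d$ as the equality case; the only cosmetic difference is that you work directly in $d$ dimensions while the paper first reduces to $d=1$ by the tensor-product structure and then takes $d$-th powers.
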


\begin{proof}
Since we are dealing with tensor products of one-dimensional spaces it suffices to prove the result for $d=1$. For $f \in F_{1,q}$ we have $$\int_0^1 f(x) \rd x = \int_0^1 \int_0^x f'(t) \rd t \rd x = \int_0^1 f'(t) g(t) \rd t,$$ where $g(t)=1-t$.  Applying H\"older's inequality we obtain 
$$\left|\int_0^1 f(x) \rd x \right| \le \|f'\|_{L_q} \|g\|_{L_p}=\|f\|_{1,q} \|g\|_{L_p}$$ with equality if $f'(t)=c (g(t))^{p-1}=c (1-t)^{p-1}$ for some $c$. This holds for $f(t) = h_1(t):=1-(1-t)^p$. 

We have $$\int_0^1 h_1(t)\rd t= \frac{p}{p+1}$$ and $$\|h_1\|_{1,q}=  
\left(\int_0^1 (px^{p-1})^q \rd x \right)^{1/q} = p  \left( \frac{1}{(p-1)q+1} \right)^{1/q} = p (1+p)^{1/p-1}.$$ 
Hence, $$e_q(0,1)=\int_0^1 \frac{h_1(t)}{\|h_1\|_{1,q}} \rd t = \frac{1}{(p+1)^{1/p}}.$$
\end{proof}

Note that for all  $q \in (1,\infty]$ and $p \in [1,\infty)$ with $1/p+1/q=1$ and for all $d \in \N$ we have $$e_q(0,d)={\rm disc}_p(0,d).$$

Now we define the information complexity as the minimal number of function evaluations necessary in order to reduce the initial error by a factor of $\varepsilon$. For $d \in \N$ and $\varepsilon \in (0,1)$ put $$N^{{\rm int}}_q(\varepsilon,d):= \min\{N \in \N \ : \ e_q(N,d) \le \varepsilon\ e_q(0,d)\}.$$

From \eqref{ine:errdisc}, \eqref{initdisc} and Lemma~\ref{le:interr} it follows that for all  $q \in (1,\infty]$ and $p \in [1,\infty)$ with $1/p+1/q=1$ and for all $d \in \N$ and $\varepsilon \in (0,1)^d$ we have  $$N^{{\rm int}}_q(\varepsilon,d) \le N^{{\rm disc}}_p(\varepsilon,d).$$

Hence, Theorem~\ref{thm1} follows from the following more general result.

\begin{thm}\label{thm2}
For every positive even integer $q$ there exists a real $C=C(q)$, which depends on $q$ and which is strictly larger than 1, such that for all $\varepsilon \in (0,1)$ we have
\begin{equation}\label{lbd:C}
N_q^{{\rm int}}(\varepsilon,d) \ge C^d(1+o(1)) \quad \mbox{for $d \rightarrow \infty$}.
\end{equation}
In particular, for all even integer $q$ the integration problem in $F_{d,q}$ suffers from the curse of dimensionality.
\end{thm}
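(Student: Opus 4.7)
The plan is to establish Theorem~\ref{thm2} by reducing integration to a dual $L_p$-approximation problem and then constructing a family of test functions whose $L_q$-norms are computable precisely because $q$ is an even positive integer.

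First I would reduce to linear algorithms. Since the unit ball $\{f \in F_{d,q} : \|f\|_{d,q} \le 1\}$ is convex and balanced and $I_d$ is a linear functional, the classical results of Smolyak and Bakhvalov (see, e.g., \cite{NW08}) show that adaption and nonlinearity do not help, so the infimum of $e(F_{d,q},A_{d,N})$ over algorithms of the form \eqref{def:Alg} equals that over linear algorithms of the form \eqref{def:linAlg}. Using the isometric identification $F_{d,q}\cong L_q([0,1]^d)$ given by $f \leftrightarrow g := \partial^d f/\partial \bsx$ (so that $f(\bsx)=\int_{[\bszero,\bsx]} g(\boldsymbol{t})\rd\boldsymbol{t}$ by the anchor-$0$ boundary condition), Fubini gives
$$
I_d(f)-A_{d,N}^{\rm lin}(f)=\int_{[0,1]^d} g(\boldsymbol{t})\,h_A(\boldsymbol{t})\rd\boldsymbol{t}, \quad h_A(\boldsymbol{t}):=\prod_{j=1}^d(1-t_j)-\sum_{k=1}^N a_k \mathbf{1}_{[\bszero,\bsx_k]}(\boldsymbol{t}),
$$
and H\"older duality then identifies $e(F_{d,q},A_{d,N}^{\rm lin})$ with $\|h_A\|_{L_p([0,1]^d)}$. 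So the task becomes: lower bound this $L_p$-norm uniformly in the nodes $\bsx_k$ and weights $a_k$.

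The core step is to produce $C=C(q)>1$ so that $\|h_A\|_{L_p}\ge C^d \varepsilon/(p+1)^{d/p}$ whenever $N<C^d$, and this is where the even-integer hypothesis enters. Using the one-sided bound $\|h_A\|_{L_p}\ge\int g\,h_A$ for any $g$ with $\|g\|_{L_q}\le 1$, I would construct a random test function of tensor-product form $g_{\boldsymbol{\xi}}(\boldsymbol{t})=\prod_{j=1}^d \phi_{\xi_j}(t_j)$, where the 1D family $\{\phi_\xi\}_\xi$ is tailored so that: (a) $\mathbb{E}\int_0^1(1-t)\phi_\xi(t)\rd t$ is close to its maximal value $(p+1)^{-1/p}\cdot(\mathbb{E}\|\phi_\xi\|_{L_q}^q)^{1/q}$ from Lemma~\ref{le:interr}, so that after tensorization the ``main term" $\mathbb{E}\int g_{\boldsymbol{\xi}}\prod_j(1-t_j)\rd\boldsymbol{t}$ is at least $A^d$ with $A\cdot(p+1)^{1/p}>1$; and (b) the cumulative integrals $\mathbb{E}|\int_0^x \phi_\xi(t)\rd t|$ are uniformly small on $[0,1]$, so that each ``sample-point term" $\mathbb{E}\int \mathbf{1}_{[\bszero,\bsx_k]}g_{\boldsymbol{\xi}}\rd\boldsymbol{t}$ decays at a strictly faster exponential rate. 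The crucial point is that, because $q$ is an even integer, $|u|^q=u^q$ is a genuine polynomial, so the normalization $\mathbb{E}\|g_{\boldsymbol{\xi}}\|_{L_q}^q=\prod_j \mathbb{E}\int \phi_{\xi_j}^q\rd t$ and the cross-moments needed in (a)-(b) are honest polynomial integrals with no absolute values to handle --- exactly the structural feature exploited in the Hilbert-space proofs of the $L_2$ case in \cite{NW01,Wo99} that survives in the even-$q$ regime. A natural candidate for $\phi_\xi$ is a zero-mean oscillatory perturbation of the deterministic 1D optimizer $t\mapsto (1-t)^{p-1}$ identified in Lemma~\ref{le:interr}, with the random phase killing the cumulative integrals in expectation while preserving the pairing with $1-t$.

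The main obstacle has two layers. First, the weights $a_k$ are not a priori bounded; a separate argument is needed --- for example, testing the algorithm on the worst-case function $h_d$ of Lemma~\ref{le:interr} and its random perturbations --- to either reduce to $\sum_k |a_k|\le\Lambda$ for some dimension-independent $\Lambda$, or to absorb large weights into the same $q$-th moment estimate used for $g_{\boldsymbol{\xi}}$. Second and more substantial is engineering the 1D family $\{\phi_\xi\}$ so that the exponential gap between properties (a) and (b) is genuinely strictly positive, yielding an honest $C>1$ in the statement: for $q=2$ this balance is achieved via decomposable reproducing kernels, and for general even $q$ the analogous combinatorial moment computation --- expanding $\mathbb{E}\int \phi_\xi^q$ and all mixed products, and establishing the strict inequality $A\cdot(p+1)^{1/p}>1$ --- is what I expect to be the longest part of the argument.
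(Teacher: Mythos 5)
Your high-level diagnosis of why even $q$ matters is the same as the paper's: $|u|^q=u^q$ is a polynomial, so norms expand into multinomial sums of cross-moments with no absolute values. But your execution diverges from the paper's in a way that runs into a genuine obstruction, and it also adds a reduction the paper does not need.

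The paper does not reduce to linear algorithms at all. It constructs an explicit \emph{fooling function} $g_d$ (from a three-way decomposition $h_1=h_{1,1}+h_{1,2,(0)}+h_{1,2,(1)}$ with carefully arranged disjoint supports) that \emph{vanishes at every quadrature node}. Consequently $A_{d,N}(g_d/\|h_d\|)=\varphi(0,\dots,0)=0$ for \emph{any} algorithm of the form \eqref{def:Alg}, so the nonlinearity of $\varphi$, the weights $a_k$, and the Smolyak--Bakhvalov machinery are all bypassed. Your ``obstacle one'' (unbounded weights $a_k$) is therefore not merely incompletely treated in your proposal --- it is a problem that the paper's construction eliminates by design, and your framework does not have a clean route around it. The even-$q$ hypothesis in the paper is used solely to show that dropping some summands from $h_d$ (those whose support box contains a node) does not increase the $L_q$-norm: expanding $\|\cdot\|_q^q$ gives a sum of one-dimensional cross-integrals $I(\alpha_j,\beta_j,\gamma_j)$, and the decomposition is engineered (disjoint supports, a far-right bump, parameters $a^*,c^*$) precisely so that every such $I(\alpha,\beta,\gamma)\ge 0$. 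You have no analogue of this nonnegativity lemma, which is where the real work sits.

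More seriously, the random-test-function strategy as you describe it is internally inconsistent. You want $\mathbb{E}\int_0^1(1-t)\phi_\xi(t)\rd t$ large (call it $A>0$) and $\mathbb{E}\bigl|\int_0^x\phi_\xi(t)\rd t\bigr|$ uniformly small on $[0,1]$. But writing $m(t):=\mathbb{E}\phi_\xi(t)$ and using Fubini,
\begin{equation*}
A=\int_0^1(1-t)\,m(t)\rd t=\int_0^1\!\left(\int_0^x m(t)\rd t\right)\rd x,
\end{equation*}
so there must exist $x$ with $\bigl|\int_0^x m(t)\rd t\bigr|\ge A$, and since $\mathbb{E}\bigl|\int_0^x\phi_\xi\bigr|\ge\bigl|\mathbb{E}\int_0^x\phi_\xi\bigr|=\bigl|\int_0^x m\bigr|$, the cumulative-integral bound cannot be pushed below $A$. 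An adversary then places a node with that $x$-coordinate in every dimension and kills your claimed exponential gap between (a) and (b). This is not a missing lemma; it is a structural reason the approach cannot yield $C>1$ as stated. The paper's construction sidesteps this entirely by working not with expectations over a random phase but with a combinatorial counting argument (at most $N$ of the $2^{|\uu|}$ product boxes can contain a node), so the ``gap'' comes from $(1-N/2^{|\uu|})_+$ being close to $1$ for most $\uu$, not from moment cancellation.
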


The proof of this result will be given in the following section.

Information about the base $C=C(q)$ of the exponentiation in Theorem~\ref{thm2} will be given in Section~\ref{sec:rem}.

\section{The proof of Theorem~\ref{thm2}}\label{sec:proof}

The proof of Theorem~\ref{thm2} is based on a suitable decomposition of the worst-case function $h_1$ from Lemma~\ref{le:interr}. This decomposition depends on $q$ and $p$, respectively, and will determine the base $C$ of the exponentiation in \eqref{lbd:C}. 

\begin{proof}[Proof of Theorem~\ref{thm2}] Let $q$ be an even, positive interger and let $p$ be such that $1/p+1/q=1$. Obviously, then $p=q/(q-1)$ and $p \in (1,2]$. According to Lemma~\ref{le:interr} the univariate worst-case function equals $$h_1(x)=1-(1-x)^p$$ and $$\|h_1\|_{1,q}=\frac{p}{(p+1)^{(p-1)/p}}.$$ Now we decompose $h_1$ in a suitable way into a sum of three functions. For real parameters $c$ and $a$, with $a \in (0,1)$ and $c \in (0,a/11)$ that will be determined later put
$$h_{1,2,(0)}(x) = \left\{ 
\begin{array}{ll}
\frac{1-(1-x)^{p}}{2} & \mbox{if $x \in [0,c)$,}\\[0.5em]
\frac{1-(1-c)^{p}}{2} & \mbox{if $x \in [c,a-10 c)$,}\\[0.5em]
\frac{1-(1-c)^{p}}{20 c}(a-x) & \mbox{if $x \in [a-10 c,a)$,}\\[0.5em]
0 & \mbox{if $x \in [a,1]$,}
\end{array}
\right. $$
and 
$$h_{1,2,(1)}(x) = {\bf 1}_{[a,1]}(x) \left((1-a)^{p}-(1-x)^{p}\right),$$
and 
$$h_{1,1}(x)=\left\{ 
\begin{array}{ll}
1-(1-x)^{p} - h_{1,2,(0)}(x)  & \mbox{if $x \in [0,a)$,}\\[0.5em]
1-(1-a)^{p}  & \mbox{if $x \in [a,1]$.}
\end{array}
\right.
$$

Then we have  $$h_{1,1}(x)+h_{1,2,(0)}(x)+h_{1,2,(1)}(x) =1-(1-x)^{p} =h_1(x)$$ and $h_{1,2,(0)}(x)$ and $h_{1,2,(1)}(x)$ have disjoint support as well as the derivatives $h_{1,1}'$ and $h_{1,2,(1)}'$. See Figure~\ref{fig_ca} for a graphical illustration of the decomposition.

\begin{figure}
\begin{center}
\includegraphics[width=10cm]{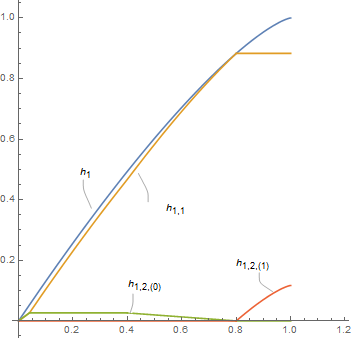}
\caption{Plot of the functions $h_1, h_{1,1}, h_{1,2,(0)}$, and $h_{1,2,(1)}$ for $q=4$ (hence $p=4/3$) and $a=0.8$ and $c=0.04$ (later we will use a different choice of parameters; but with the ones chosen here one gets a better impression of the principle situation).}
\label{fig_ca}
\end{center}
\end{figure}

Let $d \in \mathbb{N}$. For $\uu \subseteq [d]$ let $\uu^c= [d]\setminus \uu$. Now, for $\bsx=(x_1,\ldots,x_d) \in [0,1]^d$ let 
\begin{eqnarray*}
h_d(\bsx) & = & \prod_{j=1}^d h_1(x_j)\\
& = & \prod_{j=1}^d(h_{1,1}(x_j)+h_{1,2,(0)}(x_j)+h_{1,2,(1)}(x_j))\\
& = & \sum_{\uu \subseteq [d]} \prod_{j \in \uu^c} h_{1,1}(x_j) \sum_{\vv \subseteq \uu} \prod_{j \in \vv} h_{1,2,(0)}(x_j) \prod_{j \in \uu \setminus \vv} h_{1,2,(1)}(x_j).
\end{eqnarray*}
We have $$\|h_d\|_{d,q}=\|h_1\|_{1,q}^d=\left(\frac{p}{(p+1)^{(p-1)/p}}\right)^d.$$ 

For a point $\bsx \in [0,1]^d$ and for $\uu \subseteq [d]$ let $\bsx(\uu)$ denote the projection of $\bsx$ on the coordinates $j \in \uu$.

Now let $N \in \mathbb{N}$ and let $\mathcal{P}=\{\bsx_1,\bsx_2,\ldots,\bsx_N\}$ be a set of $N$ quadrature nodes in $[0,1]^d$. 
For this specific choice define the ``fooling function''
\begin{equation}\label{def:fool}
g_d(\bsx)=\sum_{\uu \subseteq [d]} \prod_{j \not\in \uu} h_{1,1}(x_j) \sum_{\vv \subseteq \uu} \hspace{-.05cm}\Big.^* \prod_{j \in \vv} h_{1,2,(0)}(x_j) \prod_{j \in \uu \setminus \vv} h_{1,2,(1)}(x_j),
\end{equation}
where the $\ast$ in $ \sum^*_{\vv \subseteq \uu}$ indicates that the summation is restricted to all $\vv \subseteq \uu$ such that for all $k \in \{1,2,\ldots,N\}$ it holds that 
$$
(\bsx_k(\vv),\bsx_k(\uu \setminus \vv)) \not \in  [0,a]^{|\vv|}\times [a,1]^{|\uu|-|\vv|}.
$$ 
This condition means that every point from the set $\mathcal{P}$ is outside the support of $\prod_{j \not\in \uu} h_{1,1}(x_j)\prod_{j \in \vv} h_{1,2,(0)}(x_j) \prod_{j \in \uu \setminus \vv} h_{1,2,(1)}(x_j)$ and this guarantees that 
$$g_d(\bsx_k)=0 \quad \mbox{for all $k \in \{1,2,\ldots,N\}$.}$$ 
Observe that the fooling function $g_d$ is an element of the $3^d$-dimensional 
(tensor product) subspace generated, for $d=1$, by the three functions 
$h_{1,1}$, $h_{1,2,(0)}$ and $h_{1,2,(1)}$. Formally, we prove the lower bound for this 
finite-dimensional subspace. 
We use here notation from the theory of decomposable kernels (see, e.g., \cite[Section~11.5]{NW10}). 
This theory can only be applied for $q=p=2$ 
and then one can choose the three functions orthogonal which 
simplifies the proof a lot (see also Section~\ref{sec:q2}).

Now we show that $\|g_d\|_{d,q} \le \|h_d\|_{d,q}$. This is the only part of the proof where we need that $q$ is an even integer.  We have
\begin{eqnarray*}
\lefteqn{\|h_d\|_{d,q}^q }\\
& = & \int_{[0,1]^d}\left|\sum_{\uu \subseteq [d]} \prod_{j \not\in \uu} h_{1,1}'(x_j) \sum_{\vv \subseteq \uu} \prod_{j \in \vv} h_{1,2,(0)}'(x_j) \prod_{j \in \uu \setminus \vv} h_{1,2,(1)}'(x_j)\right|^q \rd \bsx\\
& = & \sum_{\uu_1 \subseteq [d]}\sum_{\vv_1 \subseteq \uu_1} \sum_{\uu_2 \subseteq [d]}\sum_{\vv_2 \subseteq \uu_2}   \ldots  \sum_{\uu_q \subseteq [d]}\sum_{\vv_q \subseteq \uu_q} \int_{[0,1]^d}\\
&& \hspace{1cm}\prod_{j \in \uu_1^c}h_{1,1}'(x_j) \prod_{j \in \uu_2^c}h_{1,1}'(x_j) \ldots  \prod_{j \in \uu_q^c}h_{1,1}'(x_j) \\
&& \hspace{1cm}\prod_{j \in \vv_1}h_{1,2,(0)}'(x_j) \prod_{j \in \vv_2}h_{1,2,(0)}'(x_j) \ldots \prod_{j \in \vv_q}h_{1,2,(0)}'(x_j) \\
&& \hspace{1cm}\prod_{j \in \uu_1\setminus\vv_1}h_{1,2,(1)}'(x_j) \prod_{j \in \uu_2\setminus\vv_2}h_{1,2,(0)}'(x_j) \ldots  \prod_{j \in \uu_q\setminus\vv_q}h_{1,2,(1)}'(x_j) \rd \bsx\\
& = & \sum_{\uu_1 \subseteq [d]}\sum_{\vv_1 \subseteq \uu_1} \sum_{\uu_2 \subseteq [d]}\sum_{\vv_2 \subseteq \uu_2}   \ldots  \sum_{\uu_q \subseteq [d]}\sum_{\vv_q \subseteq \uu_q} \prod_{j=1}^d  \int_0^1 h_{1,1}'(x)^{\alpha_j} h_{1,2,(0)}'(x)^{\beta_j}   h_{1,2,(1)}'(x)^{\gamma_j} \rd x,
\end{eqnarray*}
where, for $j \in [d]$, we put 
\begin{eqnarray*}
\alpha_j & = &  |\{t \in \{1,2,\ldots,q\}\ : \ j \in \uu_t^c\}|,\\
\beta_j& = & |\{t \in \{1,2,\ldots,q\}\ : \ j \in \vv_t\}|,\\
\gamma_j& = & |\{t \in \{1,2,\ldots,q\}\ : \ j \in \uu \setminus \vv_t\}|.
\end{eqnarray*}
Note that $\alpha_j,\beta_j,\gamma_j$ also depend on $\uu_1,\vv_1,\uu_2,\vv_2,\ldots,\uu_q,\vv_q$. We have $\alpha_j,\beta_j,\gamma_j \in \{0,1,\ldots,q\}$ and $\alpha_j+\beta_j+\gamma_j=q$. 

For given $\uu_t \subseteq [d], \vv_t \subseteq \uu_t$ for $t \in \{1,2,\ldots,q\}$, put $$I(\alpha_j,\beta_j, \gamma_j)= \int_0^1 h_{1,1}'(x)^{\alpha_j} h_{1,2,(0)}'(x)^{\beta_j}   h_{1,2,(1)}'(x)^{\gamma_j} \rd x.$$
Now we show that $I(\alpha_j,\beta_j,\gamma_j)\ge 0$ for all $\alpha_j,\beta_j,\gamma_j \in \{0,1,\ldots,q\}$ with $\alpha_j+\beta_j+\gamma_j=q$. This will help us later on when we show that the norm of $g_d$ is dominated by the norm of $h_d$.

If $\alpha_j>0$ and $\gamma_j>0$, then we have $I(\alpha_j,\beta_j, \gamma_j)=0$, because $h_{1,1}'$ and $h_{1,2,(1)}'$ have disjoint support. Likewise, if $\beta_j>0$ and $\gamma_j>0$, then we have $I(\alpha_j,\beta_j, \gamma_j)=0$, because $h_{1,2,(0)}$ and $h_{1,2,(1)}$ have disjoint support. 

If $(\alpha_j,\beta_j,\gamma_j) \in \{(0,0,q),(0,q,0),(q,0,0)\}$, then obviously $I(\alpha_j,\beta_j,\alpha_j)>0$.

It remains to consider the case $\alpha_j>0,\beta_j>0$ and $\gamma_j=0$. In this case $\alpha_j+\beta_j=q$. If $\alpha_j$ (and then also $\beta_j$) is even, then  $I(\alpha_j,\beta_j, \gamma_j)>0$.

So it remains to deal with the cases where $\alpha_j$ is odd. 

Let $a^*=1-10^{-q}$ and $c^*=10^{-q}$. Let $\alpha,\beta \in \{1,\ldots,q-1\}$ be odd and assume that $\alpha+\beta=q$. We show that 
\begin{equation}\label{prop:Iab}
I(\alpha,\beta,0)\ge 0\quad\mbox{ for all $a \in [a^*,1]$ and all $c \in [0,c^*]$.}
\end{equation}

Let $a \in [a^*,1]$ and $c \in [0,c^*]$. We have
\begin{eqnarray*}
\lefteqn{I(\alpha,\beta,0)}\\
& = & \int_0^c \left(\frac{p}{2}(1-x)^{p-1}\right)^{\alpha} \left(\frac{p}{2}(1-x)^{p-1}\right)^{\beta} \rd x\\
& & + \int_c^{a-10c} 0 \rd x\\
& & + \int_{a-10 c}^a \left(p(1-x)^{p-1}+\frac{1-(1-c)^{p}}{20 c}\right)^{\alpha} \left(-\frac{1-(1-c)^{p}}{20 c}\right)^{\beta}\rd x\\
& = & \int_0^c \left(\frac{p}{2}(1-x)^{p-1}\right)^q  \rd x\\
& & - \left(\frac{1-(1-c)^{p}}{20 c}\right)^{\beta} \int_{a-10 c}^a \left(p(1-x)^{p-1}+\frac{1-(1-c)^{p}}{20 c}\right)^{\alpha}\rd x.
\end{eqnarray*}
Note that $(p-1)q=p$ and hence
\begin{eqnarray*}
\lefteqn{I(\alpha,\beta,0)}\\
& = & \left(\frac{p}{2}\right)^q \frac{1-(1-c)^{p+1}}{p+1} \\
& & - \left(\frac{1-(1-c)^{p}}{20 c}\right)^{\beta} \int_{a-10 c}^a \left(p(1-x)^{p-1}+\frac{1-(1-c)^{p}}{20 c}\right)^{\alpha}\rd x.
\end{eqnarray*}
We have $1-(1-c)^p \le p c$ and $1-a+10c \le 11 c^*$. Hence,
\begin{eqnarray*}
\lefteqn{\left(\frac{1-(1-c)^{p}}{20 c}\right)^{\beta} \int_{a-10 c}^a \left(p(1-x)^{p-1}+\frac{1-(1-c)^{p}}{20 c}\right)^{\alpha}\rd x}\\
& \le & \left(\frac{p}{20}\right)^{\beta} \int_{a-10 c}^a \left(p(1-x)^{p-1}+\frac{p}{20}\right)^{\alpha}\rd x\\
& \le & \left(\frac{p}{20}\right)^{\beta} 10c  \left(p(1-a+10c)^{p-1}+\frac{p}{20}\right)^{\alpha}\\
& = & \left(\frac{p}{2}\right)^q \frac{10c}{10^{\beta}}  \left(2(11c^*)^{p-1}+\frac{1}{10}\right)^{\alpha}\\
& \le & \left(\frac{p}{2}\right)^q \frac{10c}{10^q}  \left(\frac{32}{10}\right)^{q-1},
\end{eqnarray*}
where we used  
\begin{eqnarray*}
2(11c^*)^{p-1}+\frac{1}{10}\le 2 \frac{11^{p-1}}{10^{(p-1)q}}+\frac{1}{10} = 2 \frac{11^{p-1}}{10^p}+\frac{1}{10}=\frac{2}{11} \left(\frac{11}{10}\right)^p+\frac{1}{10}\le \frac{1}{10} \frac{32}{10}.
\end{eqnarray*}
Now we obtain
\begin{eqnarray*}
I(\alpha,\beta,0) \ge \left(\frac{p}{2}\right)^q \left(\frac{1-(1-c)^{p+1}}{p+1} - \frac{c}{10^{q-1}}  \left(\frac{32}{10}\right)^{q-1}\right).
\end{eqnarray*}
We use $1-(1-c)^{p+1} \ge (p+1) (1-c)^p c$. Then we get
\begin{eqnarray*}
I(\alpha,\beta,0) & \ge & \left(\frac{p}{2}\right)^q c \left((1-c)^p  - \frac{1}{10^{q-1}}  \left(\frac{32}{10}\right)^{q-1}\right)\\
& \ge & \left(\frac{p}{2}\right)^q c \left(\left(1-\frac{1}{10^q}\right)^p  -  \left(\frac{32}{100}\right)^{q-1}\right)\\
& = & \left(\frac{p}{2}\right)^q c \left(\left(1-\frac{1}{10^q}\right)^{q/(q-1)}  -  \left(\frac{32}{100}\right)^{q-1}\right)\\
& \ge &  \left(\frac{p}{2}\right)^q c \frac{6601}{10000} \ge 0.
\end{eqnarray*}
Hence \eqref{prop:Iab} is shown.

From now on let $a \in [a^*,1)$ and $c \in (0,c^*]$. We summarize: for $\alpha_j,\beta_j,\gamma_j \in \{0,1,\ldots,q\}$ and $\alpha_j+\beta_j+\gamma_j=q$ we have the following situation:
$$
\begin{array}{c|c|c||c|l}
\alpha_j & \beta_j & \gamma_j & I(\alpha_j,\beta_j,\gamma_j) & \mbox{reason}\\
\hline
=0 & =0 &  > 0 & >0 & \gamma_j=q \\
= 0 & > 0 & = 0 & >0 & \beta_j=q \\
0 & > 0 & > 0 & = 0 & \mbox{disjoint support of $h_{1,2,(0)}$ and $h_{1,2,(1)}$}\\
>0 & =0 & =0 & > 0 & \alpha_j=q\\
>0 & =0 & >0 & = 0 &  \mbox{disjoint support of $h_{1,1}'$ and $h_{1,2,(1)}'$}\\
>0 & > 0 & =0 & > 0 & \mbox{special choice $a \in [a^*,1)$ and $c \in (0,c^*]$}\\
>0 & >0 & >0 & =0 & \mbox{disjoint support of $h_{1,2,(0)}$ and $h_{1,2,(1)}$}
\end{array} 
$$
Thus, in any case we have $I(\alpha_j,\beta_j,\gamma_j)\ge 0$. Hence
\begin{eqnarray*}
\lefteqn{\|h_d\|_{d,q}^q }\\
& = & \sum_{\uu_1 \subseteq [d]}\sum_{\vv_1 \subseteq \uu_1}  \sum_{\uu_2 \subseteq [d]}\sum_{\vv_2 \subseteq \uu_2}  \ldots  \sum_{\uu_q \subseteq [d]}\sum_{\vv_q \subseteq \uu_q} \prod_{j=1}^d  I(\alpha_j,\beta_j,\gamma_j)\\
& \ge & \sum_{\uu_1 \subseteq [d]}\sum_{\vv_1 \subseteq \uu_1}\hspace{-.05cm}\Big.^*  \sum_{\uu_2 \subseteq [d]}\sum_{\vv_2 \subseteq \uu_2}\hspace{-.05cm}\Big.^*  \ldots  \sum_{\uu_q \subseteq [d]}\sum_{\vv_q \subseteq \uu_4}\hspace{-.05cm}\Big.^* \prod_{j=1}^d  I(\alpha_j,\beta_j,\gamma_j)\\
& = &  \int_{[0,1]^d}\left|\sum_{\uu \subseteq [d]} \prod_{j \not\in \uu} h_{1,1}'(x_j) \sum_{\vv \subseteq \uu}\hspace{-.05cm}\Big.^* \prod_{j \in \vv} h_{1,2,(0)}'(x_j) \prod_{j \in \uu \setminus \vv} h_{1,2,(1)}'(x_j)\right|^q \rd \bsx\\
& = & \|g_d\|_{d,q}^q.
\end{eqnarray*}

Now set $\widetilde{g}_d(\bsx):=(1/\|h_d\|_{d,q}) g_d(\bsx)$ such that $\|\widetilde{g}_d\|_{d,q} \le 1$. 

Put, for $(a,c) \in [a^*,1] \times [0,c^*]$, 
\begin{eqnarray*}
I_{1,2,(0)}(a,c) & := & \int_0^1 h_{1,2,(0)}(x)\rd x\\
& = & \frac{(1-(1-c)^p)((p+1)(a-5c)-1)+p c (1-c)^p}{2 (p+1)}
\end{eqnarray*}
and $$I_{1,2,(1)}(a) :=\int_0^1 h_{1,2,(1)}(x)\rd x=(1-a)^{p+1} \frac{p}{p+1}.$$

Next we show that we can choose $c \in (0,c^*)$ such that $I_{1,2,(0)}(a^*,c) =I_{1,2,(1)}(a^*)$. 

We have 
\begin{eqnarray*}
\frac{I_{1,2,(0)}(a^*,c^*)}{I_{1,2,(1)}(a^*)} & = & \frac{(1-(1-c^*)^p)((p+1)(a^*-5c^*)-1)+p c^* (1-c^*)^p}{2 (1-a^*)^{p+1} p}\\
& = & \frac{(1-(1-c^*)^p)((p+1)(1-6c^*)-1)+p c^* (1-c^*)^p}{2 (c^*)^{p+1} p}\\
& \ge & \frac{p (1-c^*)^{p-1}((p+1)(1-6c^*)-1)+p c^* (1-c^*)^p}{2 (c^*)^{p+1} p}\\
& = & \frac{(1-c^*)^{p-1}}{2 (c^*)^{p+1}}((p+1)(1-6c^*)-1+c^*(1-c^*))\\
& \ge & \frac{1-11c^*-(c^*)^2}{2 (c^*)^2}\\
&= & \tfrac{1}{2} 10^{2q}-\tfrac{11}{2} 10^q-\tfrac{1}{2} >1.
\end{eqnarray*}
Hence $I_{1,2,(0)}(a^*,c^*) > I_{1,2,(1)}(a^*)$. Now, since $I_{1,2,(0)}(a^*,0) =0 < I_{1,2,(1)}(a^*)$, we can find a $c \in (0,c^*)$ such that $I_{1,2,(0)}(a^*,c)=I_{1,2,(1)}(a^*)$, according to the mean value theorem.

From now on let 
\begin{equation}\label{fix:ac}
c \in (0,c^*) \mbox{ be such that $I_{1,2,(0)}(a^*,c) =I_{1,2,(1)}(a^*)$.}
\end{equation}

In the following we use $(x)_+=x$ if $x >0$ and 0 if $x \le 0$. Since $\widetilde{g}_d(\bsx_k)=0$ for all $k \in \{1,2,\ldots,N\}$ and since $\varphi(0,0,\ldots,0)=0$ (here $\varphi$ is from \eqref{def:Alg}) we have for any algorithm \eqref{def:Alg} that is based on $\cP$ that
\begin{eqnarray}\label{err_est1}
\lefteqn{e(F_{d,q},A_{d,N})}\nonumber\\
& \ge & \int_{[0,1]^d} \widetilde{g}_d(\bsx) \rd \bsx\nonumber\\
& = & \frac{1}{\|h_d\|_{d,q}} \sum_{\uu \subseteq [d]} \left(\int_0^1 h_{1,1}(x)\rd x\right)^{d-|\uu|} \nonumber \\
& & \hspace{1.8cm}\times \sum_{\vv \subseteq \uu} \hspace{-.05cm}\Big.^* \left(\int_0^1 h_{1,2,(0)}(x)\rd x\right)^{|\vv|} \left(\int_0^1 h_{1,2,(1)}(x) \rd x \right)^{|\uu|-|\vv|}.
\end{eqnarray}
 
Put
\begin{eqnarray*}
\alpha_1  := \int_0^1 h_{1,1}(x)\rd x= \frac{p}{p+1}(1-(1-a^*)^{p+1})-I_{1,2,(0)}(a^*,c)
\end{eqnarray*}
and remember that according to our choice \eqref{fix:ac} we have 
\begin{eqnarray*}
\int_0^1 h_{1,2,(0)}(x)\rd x = \int_0^1 h_{1,2,(1)}(x)\rd x.
\end{eqnarray*}

Now we continue from \eqref{err_est1} and obtain
\begin{eqnarray*}
\lefteqn{e(F_{d,q},A_{d,N})}\\
& \ge & \frac{1}{\|h_d\|_{d,q}} \sum_{\uu \subseteq [d]} \alpha_1^{d-|\uu|}  \left(\int_0^1 h_{1,2,(0)}(x)\rd x\right)^{|\uu|} \sum_{\vv \subseteq \uu} \hspace{-.05cm}\Big.^* 1\nonumber\\
& \ge & \frac{1}{\|h_d\|_{d,q}}  \sum_{\uu \subseteq [d]} \alpha_1^{d-|\uu|}  \left(\int_0^1 h_{1,2,(0)}(x)\rd x\right)^{|\uu|} (2^{|\uu|}-N)_+\\
&= & \frac{1}{\|h_d\|_{d,q}}  \sum_{\uu \subseteq [d]} \alpha_1^{d-|\uu|}  \left(2 \int_0^1 h_{1,2,(0)}(x)\rd x\right)^{|\uu|} \left(1-\frac{N}{2^{|\uu|}}\right)_+,
\end{eqnarray*}
where we used that for every $\uu \subseteq [d]$ at most $N$ of the intervals $$\prod_{j\in \vv}[0,a^*] \prod_{j \in \uu \setminus \vv}[a^*,1] \prod_{j \not \in \uu}[0,1]$$ with $\vv \subseteq \uu$ can contain a point from the node set $\mathcal{P}_N$.

Put $$\alpha_2:= 2 \int_0^1 h_{1,2,(0)}(x)\rd x.$$
Then we have further
\begin{eqnarray*}
e(F_{d,q},A_{d,N}) & \ge & \frac{1}{\|h_d\|_{d,q}}  \sum_{\uu \subseteq [d]} \alpha_1^{d-|\uu|}  \alpha_2^{|\uu|} \left(1-\frac{N}{2^{|\uu|}}\right)_+\\
& = & \frac{1}{\|h_d\|_{d,q}}  \sum_{k=0}^d {d \choose k} \alpha_1^{d-k}  \alpha_2^{k} \left(1-\frac{N}{2^{k}}\right)_+\\
& = &  \frac{1}{\|h_d\|_{d,q}}  \alpha_1^d \sum_{k=0}^d {d \choose k} \alpha_3^{k} \left(1-\frac{N}{2^{k}}\right)_+,
\end{eqnarray*}
where we put $$\alpha_3:=\frac{\alpha_2}{\alpha_1}>0.$$ 

Since the lower bound on the error is independent of the choice of $\mathcal{P}_N$ it follows that 
\begin{equation*}
e_q(N,d) \ge \frac{1}{\|h_d\|_{d,q}}\alpha_1^d \sum_{k=0}^d {d \choose k} \alpha_3^{k} \left(1-\frac{N}{2^{k}}\right)_+.
\end{equation*}
For the normalized $n$-th minimal error we therefore obtain
\begin{equation*}
\frac{e_q(N,d)}{e_q(0,d)} \ge \frac{1}{\|h_d\|_{d,q} \, e_q(0,d)}  \alpha_1^d \sum_{k=0}^d {d \choose k} \alpha_3^{k} \left(1-\frac{N}{2^{k}}\right)_+.
\end{equation*}
We have $$\|h_d\|_{d,q} \, e_q(0,d)=\left(\frac{p}{p+1}\right)^d$$ and hence
\begin{equation}\label{lb:ne}
\frac{e_q(N,d)}{e_q(0,d)} \ge \left(\frac{p+1}{p}\right)^d  \alpha_1^d \sum_{k=0}^d {d \choose k} \alpha_3^{k} \left(1-\frac{N}{2^{k}}\right)_+.
\end{equation}

Now we proceed using ideas from \cite[p.~185]{NW10}. Let $b \in (0,1)$ and let $N =\lfloor C^d\rfloor$ with $C \in (1,2^{\alpha_3/(1+\alpha_3)})$. This means that $$\frac{C}{2^{\alpha_3/(1+\alpha_3)}} < 1.$$ Then there exists a positive $c$ such that $$c < \frac{\alpha_3}{1+\alpha_3} \quad \mbox{ and }\quad \frac{C}{2^c}<1.$$ Put $k(d):= \lfloor c d\rfloor$. For sufficiently large $d$ we have 
\begin{equation}\label{eq:b}
\frac{N}{2^k} \le \frac{C^d}{2^{cd-1}}=2 \left(\frac{C}{2^c}\right)^d \le b \quad \mbox{ for all $k \in (k(d),d]$.}
\end{equation}
Put further $C_{d,k}:={d \choose k} \alpha_3^k$. Then \eqref{lb:ne} and \eqref{eq:b} imply that
\begin{eqnarray*}
\frac{e_q(\lfloor C^d\rfloor,d)}{e_q(0,d)} & \ge & \left(\frac{p+1}{p}\right)^d \alpha_1^d \sum_{k=k(d)+1 }^d C_{d,k} \left(1-\frac{N}{2^{k}}\right)_+\\
& \ge & (1-b)\left(\frac{p+1}{p}\right)^d \alpha_1^d \sum_{k=k(d)+1 }^d C_{d,k} \\
& = & (1-b) \left(\frac{p+1}{p}\right)^d \alpha_1^d \left(\sum_{k=0}^d C_{d,k} - \sum_{k=0}^{k(d)} C_{d,k} \right).
\end{eqnarray*}
Note that $$\sum_{k=0}^d C_{d,k} = (1+\alpha_3)^d =\frac{(\alpha_1+\alpha_2)^d}{\alpha_1^d}$$ and 
\begin{eqnarray*}
\alpha_1+\alpha_2 & = & \int_0^1 h_{1,1}(x) \rd x + 2 \int_0^1 h_{1,2,(0)}(x) \rd x\\
& = & \int_0^1 h_{1,1}(x) \rd x + \int_0^1 h_{1,2,(0)}(x) \rd x + \int_0^1 h_{1,2,(1)}(x) \rd x\\
& = & \int_0^1 h_1(x) \rd x\\
& = & \int_0^1 1-(1-x)^{p} \rd x\\
& = & \frac{p}{p+1}.
\end{eqnarray*}
This yields
\begin{eqnarray*}
\frac{e_q(\lfloor C^d\rfloor,d)}{e_q(0,d)} & \ge &  (1-b) \left(\frac{p+1}{p} (\alpha_1+\alpha_2)\right)^d  \left(1 -\frac{\sum_{k=0}^{k(d)} C_{d,k}}{(1+\alpha_3)^d} \right)\\
& = & (1-b)  \left(1 -\frac{\sum_{k=0}^{k(d)} C_{d,k}}{(1+\alpha_3)^d} \right).
\end{eqnarray*}

Now put $$\alpha(d):=\frac{\sum_{k=0}^{k(d)} C_{d,k}}{(1+\alpha_3)^d}.$$  It is shown in \cite[p.~185]{NW10} that $\alpha(d)$ tends to zero when $d \rightarrow \infty$.  Hence, for any positive $\delta$ we can find an integer $d(\delta)$ such that for all $d \ge d(\delta)$ we have $$1 \ge  \frac{e_q(\lfloor C^d\rfloor,d)}{e_q(0,d)} \ge (1-b)(1-\delta).$$ Since $b$ and $\delta$ can be arbitrarily close to zero, it follows that $$\lim_{d \rightarrow \infty} \frac{e_q(\lfloor C^d\rfloor,d)}{e_q(0,d)} =1, $$ and this holds true for all $C \in (1,2^{\alpha_3/(1+\alpha_3)})$.

Now we proceed like in \cite[p.~186]{NW10}. Take $\varepsilon \in (0,1)$. For large $d$ we have $$\varepsilon e_q(0,d) < e_q(\lfloor C^d\rfloor,d) = e_q(0,d) (1+o(1))$$ and therefore $N_q^{{\rm int}}(\varepsilon,d) \ge C^d (1+o(1))$. Since $C>1$ this means the curse of dimensionality. 
\end{proof}

\section{Some remarks}\label{sec:rem}

Some remarks are in order.

\paragraph{Possible values for $C$ in Theorem~\ref{thm2}.} According to the proof of Theorem~\ref{thm2} we have $$\alpha_3=\frac{\alpha_2}{\alpha_1}=\frac{2 I_{1,2,(0)}(a^*,c)}{\frac{p}{p+1}(1-(1-a^*)^{p+1})-I_{1,2,(0)}(a^*,c)},$$ where $c$ is choosen according to \eqref{fix:ac}, which implies $$I_{1,2,(0)}(a^*,c)=I_{1,2,(1)}(a^*)=\frac{p}{p+1} (1-a^*)^{p+1}.$$ Hence, $$\alpha_3=\frac{2(1-a^*)^{p+1}}{1-2(1-a^*)^{p+1}}$$ and further $$\frac{\alpha_3}{1+\alpha_3}=2(1-a^*)^{p+1} =\frac{2}{10^{q(p+1)}} \quad \mbox{ and }\quad 2^{\alpha_3/(1+\alpha_3)}=4^{10^{-q(p+1)}}.$$ 

We compute some values for $2^{\alpha_3/(1+\alpha_3)}$:
$$
\begin{array}{c|c}
q & 2^{\alpha_3/(1+\alpha_3)}\\
\hline
2 & 1.01396\ldots\\
4 & 1.000003482\ldots\\
6 & 1.00000000051675\ldots 
\end{array}
$$

\begin{remark}\rm
Our choice for $a^*$ was very arbitrary.  One can try to find the smallest possible $a^*$ such that all conditions are satisfied, i.e., $I(\alpha,1-\alpha,0)\ge$ for all odd $\alpha$ in $\{1,\ldots,q-1\}$ and $I_{1,2,(0)}=I_{1,2,(1)}$. This way one can improve the value of $2^{\alpha_3/(1+\alpha_3)}$. For example for $q=4$ we also found $a^*=0.930338256\ldots$ and $c=0.00186068\ldots$. With this choice we obtain $$2^{\alpha_3/(1+\alpha_3)}=1.00277\ldots.$$ Hence we can deduce that $$N_4^{{\rm int}}(\varepsilon,d) \ge (1.00277)^d (1+o(1)).$$
\end{remark}

\paragraph{History and related problems.} During the proof of Theorem~\ref{thm2} we already observed that, formally, we prove a lower bound for the integration problem and a $3^d$-dimen\-sional tensor product space. We give some history of such problems, in particular (in)tractability results. The first results appeared in 1997. In \cite{NSW97} it is proved that some non-trivial problems are easy (tractable) and for other problems the curse of dimensionality is present. An important case is the space of trigonometric polynomials of degree one in each variable;  Sloan and Wo\'zniakowski~\cite{SW97} prove the curse for this space. The same space, but with different norms, was studied later in \cite{NoX}. To prove the curse for this norm, a new technique was developed in \cite{Vy20} and further studied in \cite{HKNV21} and \cite{HKNV22}. 

As we explain in the next section, also the method of decomposable kernels (see, e.g., \cite[Theorem~11.12]{NW10})
can be seen as a contribution to this problem, even if the authors at that time did not present it that way. This technique works fine in the case of finite smoothness and is related to the technique of bump functions, see \cite{KV23} for a discussion and new results. Bump functions do not work for analytic functions, see again \cite{HKNV21,HKNV22}, and surprisingly they also do not work for functions of very low degree of smoothness, see~\cite{KV23}.

\paragraph{Generalized $L_p$-discrepancy.} 
Sometimes a more general version of $L_p$-discrepancy is considered. Here for points $\cP=\{\bsx_1,\bsx_2,\ldots,\bsx_N\}$ and corresponding coefficients $\cA=\{a_1,a_2,\ldots,a_N\}$ the discrepancy function is $$\overline{\Delta}_{\cP,\cA}(\bst)=\sum_{k=1}^N a_k {\bf 1}_{[0,\bst)}(\bsx_k) - t_1 t_2\cdots t_d$$ for $\bst=(t_1,t_2,\ldots,t_d)$ in $[0,1]^d$ and the generalized $L_p$-discrepancy is $$\overline{L}_{p,N}(\cP,\cA)=\left(\int_{[0,1]^d} |\overline{\Delta}_{\cP,\cA}(\bst)|^p \rd \bst\right)^{1/p} \quad \mbox{for $p \in [1,\infty)$,}$$ with the usual adaptions for $p=\infty$. If $a_1=a_2=\ldots=a_N=1/N$, then we are back to the classical definition of $L_p$-discrepancy from Section~\ref{sec:intro}. 

Now the $N$-th minimal generalized $L_p$-discrepancy in dimension $d$ is defined as $$\overline{{\rm disc}}_p(N,d):=\min_{\cP,\cA} \overline{L}_p(\cP,\cA)$$ where the minimum is extended over all $N$-element point sets $\cP$ in $[0,1)^d$ and over all corresponding coefficients $\cA$, and its inverse is $$N_p^{\overline{{\rm disc}}}(\varepsilon,d):=\min\{N \in \N \ : \ \overline{{\rm disc}}_p(N,d) \le \varepsilon \ \overline{{\rm disc}}_p(0,d)\},$$ where, obviously, $\overline{{\rm disc}}_p(0,d)={\rm disc}_p(0,d)$. 

Turn to the integration problem in $F_{d,q}$, then the worst-case error \eqref{eq:wce} of a linear algorithm \eqref{def:linAlg} is 
$$
e(F_{d,q},A_{d,N}^{{\rm lin}})= \overline{L}_p(\overline{\cP},\cA),
$$ 
where $\overline{\cP}$ is given in \eqref{def:oP} and $\cA$ consists of exactly the coefficients from the given linear algorithm. Again, this is well known (see \cite{NW10} or \cite{SW1998}).

With these preparations it follows from Theorem~\ref{thm2} that also the generalized $L_p$-discrepancy suffers for all 
$p$ of the form $p=\frac{2 \ell}{2 \ell -1}$ with $\ell \in \mathbb{N}$
from the curse of dimensionality.

\begin{thm}\label{thm3}
For every $p$ of the form $p=\frac{2 \ell}{2 \ell -1}$ with $\ell \in \mathbb{N}$ there exists a real $C_p$ that is strictly larger than 1, such that for all $\varepsilon \in (0,1)$ we have $$N_p^{\overline{{\rm disc}}}(\varepsilon,d) \ge C_p^d(1+o(1)) \quad \mbox{for $d \rightarrow \infty$}.$$ In particular, for all these $p$ the generalized $L_p$-discrepancy suffers from the curse of dimensionality.
\end{thm}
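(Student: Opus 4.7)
The plan is to derive Theorem~\ref{thm3} as an immediate corollary of Theorem~\ref{thm2}, by observing that the $N$-th minimal generalized $L_p$-discrepancy is essentially the $N$-th minimal worst-case error of \emph{linear} algorithms in $F_{d,q}$, and linear algorithms are a special case of general algorithms of the form \eqref{def:Alg}. No new computation is required; the heavy lifting has already been done in the proof of Theorem~\ref{thm2}, which was set up for arbitrary (not just QMC) algorithms.

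First I would spell out the correspondence. Fix $p = 2\ell/(2\ell-1)$ and $q = 2\ell$, which is a positive even integer, so that Theorem~\ref{thm2} applies. For any linear algorithm $A_{d,N}^{\rm lin}$ with nodes $\bsx_1,\dots,\bsx_N$ and weights $a_1,\dots,a_N$, the identity cited in the paragraph on generalized $L_p$-discrepancy gives
\[
e(F_{d,q}, A_{d,N}^{\rm lin}) = \overline{L}_p(\overline{\cP},\cA),
\]
where $\overline{\cP}=\{\boldsymbol{1}-\bsx_k\}$. Since the reflection $\bsx\mapsto \boldsymbol{1}-\bsx$ is a bijection of $[0,1)^d$ and the weights in $\cA$ are arbitrary real numbers, taking the infimum over all nodes and weights on one side and over all linear algorithms on $N$ nodes on the other side yields
\[
\overline{{\rm disc}}_p(N,d) \;=\; \inf_{A_{d,N}^{\rm lin}} e(F_{d,q}, A_{d,N}^{\rm lin}).
\]
Because linear algorithms form a subclass of all algorithms of the form \eqref{def:Alg}, this implies $e_q(N,d)\le \overline{{\rm disc}}_p(N,d)$ for every $N,d\in\N$.

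Next I would match the normalizations. By \eqref{initdisc} and Lemma~\ref{le:interr}, $\overline{{\rm disc}}_p(0,d) = {\rm disc}_p(0,d) = e_q(0,d) = (p+1)^{-d/p}$. Combined with the previous inequality, this gives
\[
\frac{e_q(N,d)}{e_q(0,d)} \;\le\; \frac{\overline{{\rm disc}}_p(N,d)}{\overline{{\rm disc}}_p(0,d)},
\]
whence $N_q^{{\rm int}}(\varepsilon,d) \le N_p^{\overline{{\rm disc}}}(\varepsilon,d)$ for all $\varepsilon\in(0,1)$ and all $d\in\N$. Applying Theorem~\ref{thm2} with the constant $C=C(q)>1$ supplied there, and setting $C_p := C(q)$, I obtain
\[
N_p^{\overline{{\rm disc}}}(\varepsilon,d) \;\ge\; N_q^{{\rm int}}(\varepsilon,d) \;\ge\; C_p^d (1+o(1))\qquad (d\to\infty),
\]
which is precisely the claim of Theorem~\ref{thm3}.

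There is no real obstacle: the main point is that the lower bound in Theorem~\ref{thm2} was proved for an arbitrary algorithm $A_{d,N}$, using only that $\widetilde g_d(\bsx_k)=0$ at all nodes and $\varphi(0,\dots,0)=0$, so it automatically covers linear cubature rules with arbitrary weights. Thus the curse of dimensionality transfers from the integration problem in $F_{d,q}$ to the generalized $L_p$-discrepancy without any further work.
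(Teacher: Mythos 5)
Your proposal is correct and follows exactly the paper's own route: the paper also derives Theorem~\ref{thm3} as an immediate corollary of Theorem~\ref{thm2}, using the identity $e(F_{d,q},A_{d,N}^{\rm lin})= \overline{L}_p(\overline{\cP},\cA)$ together with the fact that linear algorithms are a subclass of the algorithms \eqref{def:Alg} and that the initial quantities coincide. You merely spell out the resulting chain $N_q^{\rm int}(\varepsilon,d)\le N_p^{\overline{\rm disc}}(\varepsilon,d)$ in a bit more detail than the paper does.
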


\section{The case $p=q=2$ revisited and an outlook on the general problem}\label{sec:q2}

As already mentioned, the case $p=q=2$ is well established and can be tackled with the method of decomposable kernels (see \cite{NW01} and also \cite[Chapter~12]{NW10}). This method can be mimicked with the method presented here by taking the decomposition  
\begin{eqnarray*}
h_{1,1}(x) & = & (2-a) \min(x,a),\\
h_{1,2,(0)}(x) & = & \mathbf{1}_{[0,a]}(x) x (a-x),\\
h_{1,2,(1)}(x) & = & \mathbf{1}_{[a,1]}(x)\left(x(2-x)-a(2-a) \right)
\end{eqnarray*}
for some $a \in (0,1)$ (compare this with \cite[p.~192]{NW10}). See Figure~\ref{fig_ca2}.
\begin{figure}
\begin{center}
\includegraphics[width=10cm]{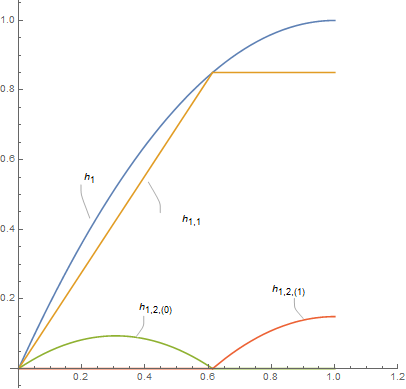}
\caption{Plot of the functions $h_1, h_{1,1}, h_{1,2,(0)}$, and $h_{1,2,(1)}$ for $q=p=2$ and $a=4^{1/3}/(1+4^{1/3})$.}
\label{fig_ca2}
\end{center}
\end{figure}
Define the fooling function $g_d$ like in \eqref{def:fool}, but with the present decomposition, it follows almost immediately from orthogonality arguments that $\|g_d\|_{d,2} \le \|h_d\|_{d,2}$. Choosing $a=4^{1/3}/(1+4^{1/3})$ (compare again with \cite[p.~192]{NW10}) gives in addition that $\int_0^1 h_{1,2,(0)}(x)\rd x = \int_0^1 h_{1,2,(1)}(x)\rd x$ such that the whole proof of Theorem~\ref{thm2} goes through with the present decomposition, but now one obtains the improved quantity $$2^{\alpha_3/(1+\alpha_3)}= 1.08332\ldots,$$ i.e., $$N_2^{{\rm int}}(\varepsilon,d) \ge (1.08332)^d (1+o(1)),$$ which is exactly the value obtained in \cite[p.~193]{NW10}. This reproves the result for $p=q=2$, but without using the method of decomposable kernels.

From this point of view, the following decomposition for general $q \in (1,\infty)$ would be somehow obvious:

\begin{eqnarray*}
h_{1,1}(x) & = & \frac{1-(1-a)^p}{a} \min(x,a),\\
h_{1,2,(0)}(x) & = & \mathbf{1}_{[0,a]}(x)\left((1-(1-x)^p)-\frac{x}{a}(1-(1-a)^p) \right),\\
h_{1,2,(1)}(x) & = & \mathbf{1}_{[a,1]}(x)\left((1-(1-x)^p)-(1-(1-a)^p) \right).
\end{eqnarray*}

Choosing $a=2^{\frac{p}{p+1}}/(1+2^{\frac{p}{p+1}})$ one obtains again that $\int_0^1h_{1,2,(0)}(x)\rd x=\int_0^1h_{1,2,(1)}(x)\rd x$. However, choosing the fooling function $g_d$ like in \eqref{def:fool} the problem is to show the estimate $\|g_d\|_{d,q} \le \|h_d\|_{d,q}$. As soon as this could be done, one can tackle the general $q$ case with the proof method from Theorem~\ref{thm2}.

\vspace{0.5cm}
\noindent{\bf Author's Address:}\\

\noindent Erich Novak, Mathematisches Institut, FSU Jena, Ernst-Abbe-Platz 2, 07740 Jena, Germany. Email: erich.novak@uni-jena.de\\

\noindent Friedrich Pillichshammer, Institut f\"{u}r Finanzmathematik und Angewandte Zahlentheorie, JKU Linz, Altenbergerstra{\ss}e 69, A-4040 Linz, Austria. Email: friedrich.pillichshammer@jku.at


\begin{thebibliography}{10}
\bibitem{A11} C. Aistleitner: Covering numbers, dyadic chaining and discrepancy. J. Complexity 27 (6): 531--540, 2011.

\bibitem{BC} J. Beck and W.W.L. Chen: Irregularities of Distribution. Cambridge University Press, Cambridge, 1987.

\bibitem{BLV} D. Bilyk, M.T. Lacey, and A. Vagharshakyan: On the small ball inequality in all dimensions. J. Funct. Anal. 254 (9): 2470--2502, 2008. 

\bibitem{C80} W.W.L. Chen: On irregularities of distribution. Mathematika 27: 153--170, 1981.

\bibitem{C83} W.W.L. Chen: On irregularities of distribution II. Quart. J. Math. Oxford 34: 257--279, 1983.

\bibitem{CS02} W.W.L. Chen and M.M. Skriganov: Explicit constructions in the classical mean squares problem in irregularities of point distribution. J. Reine Angew. Math. 545: 67--95, 2002.

\bibitem{D56} H. Davenport: Note on irregularities of distribution. Mathematika 3: 131--135, 1956.

\bibitem{D14} J. Dick: Discrepancy bounds for infinite-dimensional order two digital sequences over $\mathbb{F}_2$. J. Number Theory 136: 204--232, 2014.

\bibitem{DHPP} J. Dick, A. Hinrichs, F. Pillichshammer, and J. Prochno: Tractability properties of the discrepancy in Orlicz norms. J. Complexity 61, paper ref. 101468, 9 pp., 2020.

\bibitem{DKP} J. Dick, P. Kritzer, and F. Pillichshammer: Lattice Rules -- Numerical Integration, Approximation, and Discrepancy. Springer Series in Computational Mathematics 58, Springer, Cham, 2022.

\bibitem{DP14a} J. Dick and F. Pillichshammer: Optimal $\mathcal{L}_2$ discrepancy bounds for higher order digital sequences over the finite field $\mathbb{F}_2$. Acta Arith. 162: 65--99, 2014.

\bibitem{D21} B. Doerr: A sharp discrepancy bound for jittered sampling.  Math. Comp. 91(336): 1871--1892, 2022.

\bibitem{DT97} M. Drmota and R.F. Tichy: Sequences, Discrepancies and Applications. Lecture Notes in Mathematics 1651, Springer Verlag, Berlin, 1997.

\bibitem{GH21} M. Gnewuch and N. Hebbinghaus: Discrepancy bounds for a class of negatively dependent random points including Latin hypercube samples.  Ann. Appl. Probab. 31(4): 1944--1965, 2021. 

\bibitem{GPW} M. Gnewuch, H. Pasing, and Ch. Weiss: A generalized Faulhaber inequality, improved bracketing covers, and applications to discrepancy. Math. Comp. 90 (332): 2873--2898, 2021. 

\bibitem{hnww} S. Heinrich, E. Novak, G. Wasilkowski, and H. Wo\'{z}niakowski: The inverse of the star-discrepancy depends linearly on the dimension. Acta Arith. 96(3): 279--302, 2001.

\bibitem{Hi04} A. Hinrichs: Covering numbers, Vapnik-\v{C}ervonenkis classes and bounds for the star-discrepancy. J. Complexity 20(4): 477--483, 2004. 

\bibitem{HKNV21} 
A. Hinrichs, D. Krieg, E. Novak and J. Vyb\'\i ral: 
Lower bounds for the error of quadrature formulas for Hilbert spaces, 
J. Complexity 65, paper ref. 101544, 20 pp., 2021. 


\bibitem{HKNV22} 
A. Hinrichs, D. Krieg, E. Novak and J. Vyb\'\i ral: 
Lower bounds for integration and recovery in $L_2$, 
J. Complexity 72, paper ref. 101662, 15 pp., 2022.   


\bibitem{KV23} 
D. Krieg and J. Vyb\'\i ral: 
New lower bounds for the integration of periodic functions. 
Preprint arXiv 2302.02639. 

\bibitem{kuinie} L. Kuipers and H. Niederreiter: Uniform Distribution of Sequences. John Wiley, New York, 1974.

\bibitem{M15} L. Markhasin: $L_p$- and $S_{p,q}^rB$-discrepancy of (order $2$) digital nets. Acta Arith. 168: 139--159, 2015.

\bibitem{mat} J. Matou\v{s}ek: Geometric Discrepancy -- An Illustrated Guide, Algorithms and Combinatorics, 18,  Springer-Verlag, Berlin, 1999.

\bibitem{NoX} E. Novak:
Intractability results for positive quadrature formulas and extremal 
problems for trigonometric polynomials. 
J. Complexity 15: 299--316, 1999.

\bibitem{NSW97} E. Novak, I.H. Sloan and H. Wo\'zniakowski: 
Tractability of tensor product linear operators, 
J. Complexity 13: 387--418, 1997. 



\bibitem{NW01} E. Novak and H. Wo\'{z}niakowski: Intractability results for integration and discrepancy. J. Complexity 17(2):  388--441, 2001.

\bibitem{NW08} E. Novak and H. Wo\'{z}niakowski: Tractability of Multivariate Problems. Volume I: Linear Information. EMS Tracts in Mathematics 16, Z\"urich, 2008.

\bibitem{NW10} E. Novak and H. Wo\'{z}niakowski: Tractability of Multivariate Problems. Volume II: Standard Information for Functionals. EMS Tracts in Mathematics 12, Z\"urich, 2010.

\bibitem{PW20} H. Pasing and C. Weiss: Improving a constant in high-dimensional discrepancy estimates. Publ. Inst. Math. (Beograd) (N.S.) 107(121): 67--74, 2020.

\bibitem{P22} F. Pillichshammer: The BMO-discrepancy suffers from the curse of dimensionality. J. Complexity 76, paper ref. 101739, 7 pp., 2023.

\bibitem{Roth} K.F. Roth: On irregularities of distribution. Mathematika 1: 73--79, 1954.

\bibitem{R80} K.F. Roth: On irregularities of distribution. IV. Acta Arith. 37: 67--75, 1980.

\bibitem{S06} M.M. Skriganov: Harmonic analysis on totally disconnected groups and irregularities of point distributions. J. Reine Angew. Math. 600: 25--49, 2006.

\bibitem{SW97}  I.H. Sloan and H. Wo\'{z}niakowski: 
An intractability result for multiple integration. 
Math. Comp. 66: 1119--1124, 1997.  


\bibitem{SW1998}  I.H. Sloan and H. Wo\'{z}niakowski: When are quasi-Monte Carlo algorithms efficient for high-dimensional integrals? J. Complexity 14: 1--33, 1998.

\bibitem{sch77} W.M. Schmidt: Irregularities of distribution. X. In: Number Theory and Algebra, pp. 311--329, Academic Press, New York, 1977.

\bibitem{Vy20}  
J. Vyb\'\i ral: 
A variant of Schur's product theorem and its applications. 
Adv. Math. 368, paper ref. 107140, 9 pp., 2020. 

\bibitem{Wo99} H. Wo\'{z}niakowski: Efficiency of quasi-Monte Carlo algorithms for high dimensional integrals. In: Monte Carlo and Quasi-Monte Carlo Methods 1998 (H. Niederreiter and J. Spanier, eds.), pp.  114--136, Springer Verlag, Berlin, 2000.
\end{thebibliography}
\end{document}